\newtheorem{thm}{Theorem}[section]
\newtheorem{lem}[thm]{Lemma}
\theoremstyle{definition}
\theoremstyle{remark}
\numberwithin{equation}{section}
\newcommand{\R}{\mathbf{R}}  
\DeclareMathOperator{\liet}{\mathfrak{t}}
\DeclareMathOperator{\cO}{\mathcal{O}}
\DeclareMathOperator{\cI}{\mathcal{I}}
\DeclareMathOperator{\cH}{\mathcal{H}}
\DeclareMathOperator{\cD}{\mathcal{D}}
\DeclareMathOperator{\cL}{\mathcal{L}}
\DeclareMathOperator{\bC}{\mathbb{C}}
\DeclareMathOperator{\bG}{\mathbb{G}}
\newcommand{\colim@}[2]{%
  \vtop{\m@th\ialign{##\cr
    \hfil$#1\operator@font colim$\hfil\cr
    \noalign{\nointerlineskip\kern1.5\ex@}#2\cr
    \noalign{\nointerlineskip\kern-\ex@}\cr}}%
}
\newcommand{\colim}{%
  \mathop{\mathpalette\colim@{\rightarrowfill@\textstyle}}\nmlimits@
}
\newcommand{\nlim@}[2]{%
  \vtop{\m@th\ialign{##\cr
    \hfil$#1\operator@font lim$\hfil\cr
    \noalign{\nointerlineskip\kern1.5\ex@}#2\cr
    \noalign{\nointerlineskip\kern-\ex@}\cr}}%
}
\newcommand{\nlim}{%
  \mathop{\mathpalette\nlim@{\leftarrowfill@\textstyle}}\nmlimits@
}
\begin{document}

\title{A remark on descent for Coxeter groups}

\author{Gus Lonergan}
\address{Department of Mathematics, Massachusetts Institute of Technology, 
Cambridge, MA 02139}
\email{gusl@mit.edu}


\begin{abstract}
Let $\Gamma$ be a finite Coxeter group with reflection representation $R$. We show that a $\Gamma$-equivariant quasicoherent sheaf on $R$ descends to the quotient space $R//\Gamma$ if it descends to the quotient space $R//\langle s_i\rangle$ for every simple reflection $s_i\in \Gamma$.
\end{abstract}

 \maketitle
\setcounter{tocdepth}{1}
\tableofcontents




\section{Introduction}

\subsection{}Let $W$ be a finite Weyl group with reflection representation $\liet$ over $\bC$, and let $\widetilde{W}$ denote the extended affine Weyl group which also acts on $\liet$ in the natural manner. In \cite[Thm 1.2]{lonerganfourier}, the author has demonstrated that there is an equivalence of categories between modules for a certain algebra, denoted $H$, and the full subcategory of the category of $\widetilde{W}$-equivariant quasicoherent sheaves on $\liet$ which descend to $\liet//\Gamma$ for every finite parabolic subgroup $\Gamma$ of the affine Weyl group $W^{aff}$. On the other hand, it follows directly from the result of V. Ginzburg, \cite[Prop 6.2.5]{ginzburghecke}, that the category of $H$-modules is equivalent to the full subcategory of $\widetilde{W}$-equivariant quasicoherent sheaves on $\liet$ which descend to $\liet//W$. We would like to explain this apparent discrepancy.

\subsection{}In \cite[Lem 2.1.1]{bezmirsom}, it is shown that every reflection in $W^{aff}$ is conjugate in $\widetilde{W}$ to a simple reflection of $W$ \footnote{This fact may be standard but this is the only reference the author is aware of.}. One sees therefore that a $\widetilde{W}$-equivariant quasicoherent sheaf on $\liet$ which descends to $\liet//W$ descends also to $\liet//\langle s\rangle$ for every reflection $s$ in $W^{aff}$. Now fix a finite parabolic subgroup $\Gamma$ of $W^{aff}$. Then $\Gamma$ is generated by its reflections, and is the stabilizer in $W^{aff}$ of some point of $\liet$. Translating to $0$, one sees that $\Gamma$ is the Weyl group of some root subsystem of the root system of $W$. It follows that $\liet$ is the direct sum of the reflection representation $\liet_\Gamma$ of $\Gamma$ and the invariant subspace $(\liet)^\Gamma$. Thus Ginzburg's claim is seen to follow from the author's in light of the following:

\begin{thm}\label{thm1}Let $\Gamma$ be a finite Coxeter group with reflection representation $R$ over $\bC$. Let $X$ be a scheme over $\bC$ and let $Y$ be a $\Gamma$-equivariant $R$-bundle over $X$. Let $M$ be a $\Gamma$-equivariant quasicoherent sheaf on $Y$. Then $M$ descends to $Y//\Gamma$ if it descends to $Y//\langle s_i\rangle$ for every simple reflection $s_i$ in $\Gamma$.
\end{thm}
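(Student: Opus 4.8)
The plan is to reduce everything to a statement about modules over a polynomial ring and then to induct on $|\Gamma|$. Since whether $M$ descends to $Y//\Gamma$ is local on $X$ and is detected by a canonical ``multiplication'' map being an isomorphism, I would first shrink $X$ to an affine $\mathrm{Spec}(C)$ over which the bundle is trivial, write $Y=\mathrm{Spec}(\widetilde A)$ with $\widetilde A=(\operatorname{Sym} R^{*})\otimes_{\bC}C$ and $\widetilde B=\widetilde A^{\Gamma}$, and reformulate the claim as: the multiplication map $\phi_{\Gamma}\colon\widetilde A\otimes_{\widetilde B}M^{\Gamma}\to M$ is an isomorphism, given that the analogous $\phi_{\langle s_i\rangle}$ are. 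Here I would record the Chevalley--Shephard--Todd input — $\widetilde A$ is free over $\widetilde B$ with basis the coinvariant algebra (a copy of the regular representation), and more generally $\widetilde A^{\Gamma_I}$ is free over $\widetilde A^{\Gamma_J}$ whenever $\Gamma_I\subseteq\Gamma_J$ are parabolic, by Auslander--Buchsbaum — and observe that, by transport of structure along the $\Gamma$-action, the hypothesis upgrades: $M$ descends to $Y//\langle s\rangle$ for \emph{every} reflection $s$, not only the simple ones.

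Next I would set up the induction on $|\Gamma|$. The trivial group and $\langle s\rangle$ are immediate. For the inductive step the key structural fact is that for a proper standard parabolic $\Gamma'\subsetneq\Gamma$ one has $R\cong R_{\Gamma'}\oplus R^{\Gamma'}$ as a $\Gamma'$-representation, $R_{\Gamma'}$ being the reflection representation of $\Gamma'$; hence $Y$ is a $\Gamma'$-equivariant $R_{\Gamma'}$-bundle over $R^{\Gamma'}\times X$, and since the simple reflections of $\Gamma'$ are among those of $\Gamma$, the inductive hypothesis gives that $M$ descends to $Y//\Gamma'$. Running this over all proper parabolics, handling the reducible case $\Gamma=\Gamma_1\times\Gamma_2$ separately (descend to $Y//\Gamma_1$, note that $Y//\Gamma_1$ is a $\Gamma_2$-equivariant bundle for the reflection representation of $\Gamma_2$, and apply the theorem for the smaller group $\Gamma_2$) together with the small exceptional case $(\mathbf{Z}/2)^{2}$ by hand, I reduce to the situation where $\Gamma$ is irreducible and it is already known that $M$ descends to $Y//\Gamma'$ for every proper parabolic $\Gamma'$ of $\Gamma$.

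With that in hand I would check $\phi_{\Gamma}$ at every prime of $\widetilde A$. Because $\Gamma$ acts on $R$ as a reflection group, the stabilizer of any point is a parabolic $\Gamma_{y}$ (Steinberg). If $\Gamma_{y}\neq\Gamma$, then after localizing at the corresponding prime of $\widetilde B$ the ring $\widetilde A$ becomes, up to the relevant semilocal/henselian completion, induced from $\widetilde A^{\Gamma_{y}}$, and $\phi_{\Gamma}$ is there identified with the $\Gamma_{y}$-descent map, which is an isomorphism since $M$ descends to $Y//\Gamma_{y}$. This disposes of every prime except those containing the augmentation ideal, i.e.\ the locus $\{0\}\times X$ (the origin being the only $\Gamma$-fixed point of $R$, as $R^{\Gamma}=0$); so $K=\ker\phi_{\Gamma}$ and $Q=\operatorname{coker}\phi_{\Gamma}$ are supported there, and all that remains is $K=Q=0$.

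This last step is the hard part: one cannot simply complete at the origin (unsafe for non-coherent sheaves), so the reflection hypothesis must be used directly on the fixed locus. The tool I would use is the Coxeter (Solomon--Tits) complex: its augmented chain complex is a $\Gamma$-equivariant exact sequence $0\to\bC_{\mathrm{sgn}}\to\bC[\Gamma]\to\bigoplus_{|I|=1}\bC[\Gamma/\Gamma_{I}]\to\cdots\to\bigoplus_{|I|=n-1}\bC[\Gamma/\Gamma_{I}]\to\bC_{\mathrm{triv}}\to 0$. Applying $\bigl(\widetilde A\otimes_{\bC}-\bigr)^{\Gamma}\otimes_{\widetilde B}M^{\Gamma}$ on one side and $\operatorname{Hom}_{\Gamma}(-,M)$ on the other produces two exact complexes together with a natural comparison map whose components are, apart from $\phi_{\Gamma}$ itself, the multiplication maps $\widetilde A^{\Gamma_{I}}\otimes_{\widetilde B}M^{\Gamma}\to M^{\Gamma_{I}}$ (for the $\langle s_i\rangle$ and for larger proper parabolics) and the map $M^{\Gamma}\xrightarrow{\,\cdot\,\delta\,}M^{\Gamma,\mathrm{sgn}}$, $\delta$ the product of the reflecting linear forms. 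Combined with the reformulation that $M$ descends to $Y//\langle s_i\rangle$ exactly when multiplication by $\ell_i$ is an isomorphism $M^{s_i=+1}\xrightarrow{\sim}M^{s_i=-1}$, and with the previous step for the larger parabolics, a diagram chase should pin down $\phi_{\Gamma}$. I expect the genuine obstacle to lie precisely here — showing the auxiliary multiplication maps are isomorphisms without arguing in a circle — and I expect its resolution to require the finer $\partial_{i}$-calculus (Demazure operators and the Pittie--Steinberg description of $\widetilde A$ over $\widetilde A^{\Gamma_{I}}$) rather than soft arguments alone.
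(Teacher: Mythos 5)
Your reductions---to the affine statement, the upgrade from simple reflections to all reflections by conjugacy, the parabolic induction giving descent to $Y//\Gamma'$ for every proper parabolic $\Gamma'$, and the localization confining $\ker\phi_\Gamma$ and $\operatorname{coker}\phi_\Gamma$ to the fixed locus---are all reasonable (though the henselization step needs care with decomposition versus inertia groups at non-closed primes, since $X$ is an arbitrary scheme). But the proof does not close, and the gap is exactly where you flag it. The Coxeter-complex comparison needs as input that the maps $\widetilde A^{\Gamma_I}\otimes_{\widetilde B}M^{\Gamma}\to M^{\Gamma_I}$ for $\emptyset\neq I\subsetneq\Sigma$ and $M^{\Gamma}\xrightarrow{\;\delta\;}M^{\mathrm{sgn}}$ are isomorphisms, and neither follows from what you have established. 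The inductive hypothesis says that $M$ descends from $Y$ to $Y//\Gamma_I$, i.e.\ that $\widetilde A\otimes_{\widetilde A^{\Gamma_I}}M^{\Gamma_I}\to M$ is an isomorphism; what the five-lemma chase needs is the different statement that $M^{\Gamma_I}$ descends from $Y//\Gamma_I$ to $Y//\Gamma$, which has exactly the shape of the theorem itself (one level up rather than down) and, taken over all $I$ together with the sign component, is equivalent to the conclusion. Likewise $\delta$ being an isomorphism is an immediate consequence of the conclusion, but I see no way to derive it from the rank-one hypotheses short of proving the theorem. So the circle is not broken, and note that all of your reductions funnel the difficulty into a rank-two statement at the origin: that statement carries the entire content and has no proof in your write-up.

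For comparison, the paper's route is purely algebraic and lives entirely in rank two. It encodes ``descends to each $Y//\langle s_i\rangle$'' as a module over the Demazure descent algebra $\cD$ (generators $G_i$ with $G_i^2=0$ and the twisted Leibniz commutation with $\cO(R)$, but no braid relations), encodes $\Gamma$-equivariance via the two-sided ideal generated by the Coxeter braid relations $B$ in the elements $s_i=1-\alpha_iG_i$, and encodes ``descends to $Y//\Gamma$'' via the ideal generated by the Demazure braid relations $B^D$. The theorem becomes the assertion $B^D_{k,l}\in\cD B\cD$, which is proved from the identity $B_{k,l}=(-1)^{m_{k,l}}\Delta_{k,l}B^D_{k,l}$ (with $\Delta_{k,l}$ the product of the positive roots of the rank-two subsystem) by a two-step induction on words in the Demazure operators, using an operator $Z$ with $Z(\Delta_{k,l})=1$. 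To salvage your approach you would need a substitute for precisely this computation in order to establish the two families of auxiliary isomorphisms; your closing remark that the ``finer $\partial_i$-calculus'' is required points in the right direction, but as written the key step is missing.
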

\noindent We emphasize that the content of this theorem is essentially all contained in the case $Y=R$.

\subsection{Bad grammar.} Let $\Gamma$ be a finite group acting on a scheme $Y$ with GIT quotient $q:Y\to Y//\Gamma$. Suppose $M$ is a quasicoherent sheaf on $Y$. We say \emph{$M$ descends to $Y//\Gamma$} to mean that $M$ is equipped with an isomorphism $M\cong q^*M'$ for some quasicoherent sheaf $M'$ on $Y//\Gamma$. This is not a property of $M$, but rather additional data. Note that in that case $M$ receives a $\Gamma$-equivariant structure. Now suppose instead that $M$ is a $\Gamma$-equivariant quasicoherent sheaf on $Y$. We employ the same phrase: \emph{$M$ descends to $Y//\Gamma$} to mean that the underlying quasicoherent sheaf descends to $Y//\Gamma$ (in the previous sense), and moreover that the induced $W$-equivariant structure coincides with the original one. More generally, for a subgroup $\Gamma'\subset\Gamma$, we will say that \emph{$M$ descends to $Y//\Gamma'$} to mean merely that the underlying $\Gamma'$-equivariant quasicoherent sheaf descends to $Y//\Gamma'$. 

\section{Two Algebras}

\subsection{The nil Hecke algebra.}Let $\Gamma$ be a finite Coxeter group with simple reflections $\{s_i\}_{i\in\Sigma}$. Denote by $\alpha_i$ the corresponding simple roots; then $\cO(R)$ is naturally identified with the symmetric algebra on the various $\alpha_i$. For $i,j\in\Sigma$ let $m_{i,j}$ denote the order of $s_is_j$. Following \cite{kumar}, the nil Hecke algebra is the algebra generated by the symbols $\{D_i\}_{i\in\Sigma}$ subject to the relations
$$
D_i^2=0
$$
and
$$
\underbrace{D_iD_jD_i\ldots}_{m_{i,j}}=\underbrace{D_jD_iD_j\ldots}_{m_{i,j}}
$$
for all $i,j\in\Sigma$. The latter relations imply that for any $w\in\Gamma$ the element 
$$
D_w:=D_{i_1}D_{i_2}\ldots D_{i_l}
$$
is independent of choice of reduced expression $w=s_{i_1}s_{i_2}\ldots s_{i_l}$. Together with the former relations, one sees that $\{D_w\}_{w\in \Gamma}$ form a basis for the nil Hecke algebra, and also that the nil Hecke algebra is graded with each $D_i$ in degree $-1$ by convention.

\subsection{}This algebra acts on $\cO(R)$ by setting
$$
D_i(f)=\alpha_i^{-1}.(1-s_i)(f),
$$
for any $f\in\cO(R)$; this is the so-called Demazure operator. $D_i$ acts by $s_i$-derivations, that is:
$$
D_i(f.g)=D_i(f).g+s_i(f).D_i(g)
$$
for any $f,g\in\cO(R)$. One may accordingly take the smash product of the nil Hecke algebra with $\cO(R)$. The resulting algebra, denoted $\cH$, is free over its subalgebra $\cO(R)$ with respect to both left and right multiplication, with basis $\{D_w\}_{w\in \Gamma}$ and is generated by the nil Hecke algebra and $\cO(R)$ subject to the relations:
$$
D_i.f=D_i(f)+s_i(f).D_i.
$$
In fact it suffices to impose these relations when $f=\theta$ is a linear function, in which case the relations become:
$$
D_i.\theta=\langle\alpha_i^\vee,\theta\rangle+s_i(\theta).D_i
$$
where $\alpha_i^\vee$ is the coroot dual to $\alpha_i$. We extend the grading on the nil Hecke algebra to one on $\cH$ by putting the linear functions in degree $1$.

\subsection{}The algebra $\cH$ may also be understood as the largest subalgebra of the smash product $\Gamma\#Frac(\cO(R))$ which stabilizes $\cO(R)$ in its action on $Frac(\cO(R))$; $D_i$ corresponds to the element $\alpha_i^{-1}.(1-s_i)$. Also $\cH$ receives an $\cO(R)$-algebra map from $\Gamma\#\cO(R)$ determined by sending $s_i$ to $1-\alpha_i.D_i$, in a diagram
$$
\Gamma\#\cO(R)\to\cH\to\Gamma\#Frac(\cO(R)).
$$

\subsection{}The main point about $\cH$ is that it is in an appropriate sense dual to the Hopf algebroid $\cO(R\times_{R//\Gamma}R)$, whose comodules are equivalent to quasicoherent sheaves on $R//\Gamma$ since $R\to R//\Gamma$ is faithfully flat. We therefore see that:
\begin{lem}\label{lem1}The $\Gamma$-equivariant quasicoherent sheaf $M$ on $R$ descends to $R//\Gamma$ if and only if the $\Gamma\#\cO(R)$-module structure on $M$ may be extended to an $\cH$-module structure.\end{lem}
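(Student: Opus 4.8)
\noindent\emph{Proof plan.}
The plan is to convert ``descends to $R//\Gamma$'' into a module-theoretic statement via faithfully flat descent together with the duality between $\cH$ and the relevant Hopf algebroid. First, since $\Gamma$ acts on $R$ as a reflection group, the Chevalley--Shephard--Todd theorem makes $\cO(R)^\Gamma=\cO(R//\Gamma)$ polynomial and $\cO(R)$ free over it of rank $|\Gamma|$; in particular $q\colon R\to R//\Gamma$ is finite and faithfully flat. Faithfully flat descent then identifies $\mathrm{QCoh}(R//\Gamma)$ with the category of comodules over the Hopf algebroid $A:=\cO(R\times_{R//\Gamma}R)=\cO(R)\otimes_{\cO(R)^\Gamma}\cO(R)$, in such a way that the functor forgetting the coaction corresponds to $q^*$. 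Hence a descent of the underlying quasicoherent sheaf $M$ to $R//\Gamma$ amounts to a choice of $A$-comodule structure on $M$.

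The second step is to make the duality between $\cH$ and $A$ precise and then dualize. As an $\cO(R)$-module (via either tensor factor) $A$ is finite free, so the $\cO(R)$-linear dual $A^{\vee}=\operatorname{Hom}_{\cO(R)}(A,\cO(R))$ inherits from the comultiplication and counit of $A$ the structure of an associative ring equipped with a map from $\cO(R)$, and by the finite-rank duality an $A$-comodule structure on $M$ is the same datum as an $A^{\vee}$-module structure on $M$ extending its $\cO(R)$-action (coassociativity $\leftrightarrow$ associativity, counit $\leftrightarrow$ unit). Tensor--hom adjunction computes $A^{\vee}=\operatorname{Hom}_{\cO(R)}\bigl(\cO(R)\otimes_{\cO(R)^\Gamma}\cO(R),\,\cO(R)\bigr)=\operatorname{End}_{\cO(R)^\Gamma}(\cO(R))$, with multiplication the composition of operators; so the asserted duality is the statement that the Demazure action gives an isomorphism $\cH\xrightarrow{\ \sim\ }\operatorname{End}_{\cO(R)^\Gamma}(\cO(R))$. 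To prove that, one checks: the map is injective, because the $D_w$ are $\cO(R)$-linearly independent as operators (a triangularity argument); it becomes an isomorphism after inverting the discriminant, where $\cO(R)/\cO(R)^\Gamma$ is \'etale; and it is an isomorphism over all of $\cO(R)$, which---since $\operatorname{End}_{\cO(R)^\Gamma}(\cO(R))$ is reflexive and $\cH$ is free---may be checked in codimension one, where it reduces to the case of a single reflection. Granting the isomorphism, $A$-comodule structures on $M$ are exactly $\cH$-module structures extending the $\cO(R)$-action.

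The third step reconciles the equivariant structures, i.e.\ addresses the ``bad grammar.'' Under $\cH\cong\operatorname{End}_{\cO(R)^\Gamma}(\cO(R))$ both $\cO(R)$ (acting by multiplication) and $\Gamma$ (acting through its action on $R$) map into the endomorphism ring, yielding a ring map $\Gamma\#\cO(R)\to\cH$; since $(1-\alpha_iD_i)(f)=s_i(f)$ for all $f\in\cO(R)$ and $\cH$ acts faithfully, this map is precisely $s_i\mapsto 1-\alpha_iD_i$, the one recorded in the excerpt. Moreover, for $M\cong q^*M'=\cO(R)\otimes_{\cO(R)^\Gamma}M'$ the $\cH=\operatorname{End}_{\cO(R)^\Gamma}(\cO(R))$-action produced by descent acts on the $\cO(R)$-tensor-factor, so its restriction along $\Gamma\#\cO(R)\to\cH$ is exactly the canonical $\Gamma$-equivariant structure of the pullback $q^*M'$---the ``induced'' equivariant structure of the introduction. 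Putting the three steps together: the $\Gamma$-equivariant sheaf $M$ descends to $R//\Gamma$ in the paper's sense iff its underlying sheaf carries an $A$-comodule structure whose induced equivariant structure is the given one, iff the given $\Gamma\#\cO(R)$-module structure on $M$ extends to an $\cH$-module structure.

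Given the excerpt's framing, the real work is concentrated in the middle step: once the duality $\cH\cong\operatorname{End}_{\cO(R)^\Gamma}(\cO(R))\cong A^{\vee}$ is established---with surjectivity over $\cO(R)$, as opposed to merely over its fraction field, being the point---the lemma is formal, the remaining inputs being only faithfully flat descent, the comodule/module dictionary for a coring finite projective over its base, and the elementary identification of $s_i\mapsto 1-\alpha_iD_i$ with the inclusion of $\Gamma$ and $\cO(R)$ as operators. (If instead one takes the duality as already in hand, as the excerpt suggests, then only the equivariant bookkeeping of the third step needs to be spelled out.)
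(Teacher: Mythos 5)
Your proposal is correct and follows essentially the same route as the paper, which disposes of this lemma in one sentence by invoking faithfully flat descent and the duality of $\cH$ with the Hopf algebroid $\cO(R\times_{R//\Gamma}R)$; your plan simply fleshes out that duality as the isomorphism $\cH\xrightarrow{\sim}\operatorname{End}_{\cO(R)^\Gamma}(\cO(R))$ (generic \'etaleness plus a codimension-one/reflexivity argument) and adds the equivariant bookkeeping the paper leaves implicit.
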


\subsection{}\label{sheafify}Suppose $X$ is a scheme over $\bC$. Since $R$ is an irreducible representation of $W$, the category of $W$-equivariant $R$-bundles over $X$ is equivalent to the category of $\bG_m$-torsors over $X$. Recall that $\cH$ is graded, extending the grading of $\cO(R)$ which comes from the dilation of $R$. Therefore given a $W$-equivariant $R$-torsor $\pi:Y\to X$ one may take the underlying $\bG_m$-torsor $\pi:\cL\to X$ and form 
$$
\cH(Y):=\pi_*\cO(\cL)\otimes^{\bG_m}\cH.
$$
This is a quasicoherent sheaf of Hopf algebroids over $\pi_*\cO(Y)$ on $X$ whose fibers are copies of $\cH$ and which receives a natural map from $\Gamma\#\cO(Y):=\pi_*\cO(\cL)\otimes^{\bG_m}(\Gamma\#\cO(R))$. It is dual over $\pi_*\cO(Y)$ to the Hopf algebroid
$$
\pi_*\cO(Y\times_{Y//\Gamma}Y)
$$
whose comodules over $\pi_*\cO(Y)$ are equivalent to quasicoherent sheaves on $Y//\Gamma$, since $Y\to Y//\Gamma$ is faithfully flat. Thus one obtains the generalization of Lemma \ref{lem1}:
\begin{lem}\label{lem2}The $\Gamma$-equivariant quasicoherent sheaf $M$ on $Y$ descends to $Y//\Gamma$ if and only if the $\Gamma\#\cO(Y)$-module structure on $\pi_*M$ may be extended to an $\cH(Y)$-module structure.\end{lem}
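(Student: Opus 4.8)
The plan is to reduce the statement to the absolute case (Lemma~\ref{lem1}: $M$ descends to $R//\Gamma$ iff its $\Gamma\#\cO(R)$-module structure extends to an $\cH$-module structure) by a flat base change, and to settle that case by faithfully flat descent together with the stated duality between $\cH$ and the Hopf algebroid $\cO(R\times_{R//\Gamma}R)$.

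First I would pass to an affine base. Both conditions in the lemma are local on $X$: a descent datum for $M$ along $Y\to Y//\Gamma$ glues over an open cover $\{U_i\}$ of $X$, because $Y//\Gamma$ is relative $\spec$ of $\pi_*\cO(Y)^\Gamma$ over $X$ and hence restricts to $(Y|_{U_i})//\Gamma$ over $U_i$, these subschemes covering $Y//\Gamma$; and an extension of the $\Gamma\#\cO(Y)$-action on $\pi_*M$ to an $\cH(Y)$-action likewise glues once the equivalence below is chosen canonically. Now the underlying $\bG_m$-torsor $\cL$ is a line bundle, hence trivializes Zariski-locally; over an affine $U=\spec A$ on which it does, $Y|_U\cong U\times R$ with the product $\Gamma$-action, and accordingly $\cH(Y)|_U\cong A\otimes_\bC\cH$, $(\Gamma\#\cO(Y))|_U\cong A\otimes_\bC(\Gamma\#\cO(R))$, $(Y//\Gamma)|_U\cong U\times(R//\Gamma)$, $(Y\times_{Y//\Gamma}Y)|_U\cong U\times(R\times_{R//\Gamma}R)$; that is, the configuration over $U$ is the flat base change along $\bC\to A$ of the one over $\spec\bC$. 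So it suffices to prove the $A$-linear form of Lemma~\ref{lem1}.

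Next I would run the descent argument over $\spec A$. By Chevalley's theorem $\cO(R)$ is free of rank $|\Gamma|$ over $\cO(R)^\Gamma=\cO(R//\Gamma)$; tensoring with $A$, this makes $Y|_U\to(Y//\Gamma)|_U$ faithfully flat and $\cO(Y\times_{Y//\Gamma}Y)|_U$ finite free over $\cO(Y)|_U$ on each side. Faithfully flat descent identifies quasicoherent sheaves on $(Y//\Gamma)|_U$ with comodules over the Hopf algebroid $\pi_*\cO(Y\times_{Y//\Gamma}Y)|_U$ relative to $\pi_*\cO(Y)|_U$; finite freeness makes $\cO(Y)|_U$-linear duality an exact anti-equivalence from these comodules to modules over the dual, which is $\cH(Y)|_U$ by the stated duality base-changed to $A$. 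Furthermore the graph of the $\Gamma$-action is a map of groupoids over $Y|_U$ from $\Gamma\times Y|_U$ into $(Y\times_{Y//\Gamma}Y)|_U$, giving a surjection $\pi_*\cO(Y\times_{Y//\Gamma}Y)|_U\twoheadrightarrow\prod_{w\in\Gamma}\cO(Y)|_U$ of Hopf algebroids whose $\cO(Y)|_U$-dual is the inclusion $(\Gamma\#\cO(Y))|_U\hookrightarrow\cH(Y)|_U$, and comodules over $\prod_w\cO(Y)|_U$ are exactly $\Gamma$-equivariant quasicoherent sheaves on $Y|_U$. Chasing a $\Gamma$-equivariant $M$ through this dictionary shows that a descent of $M$ to $(Y//\Gamma)|_U$ refining its given equivariant structure is the same datum as an extension of the $(\Gamma\#\cO(Y))|_U$-action on $\pi_*M|_U$ to an $\cH(Y)|_U$-action; reassembling over $X$ gives the lemma.

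The step I expect to be the real obstacle is turning ``$\cH$ is dual to $\cO(R\times_{R//\Gamma}R)$'' into an honest anti-equivalence: one must produce an evaluation pairing $\cH\otimes_{\cO(R)}\cO(R\times_{R//\Gamma}R)\to\cO(R)$ and verify it is \emph{perfect} between finite free $\cO(R)$-modules --- it is visibly nondegenerate after inverting, since over $Frac(\cO(R))$ the groupoid $R\times_{R//\Gamma}R$ becomes the action groupoid of $\Gamma$ and both sides get bases indexed by $\Gamma$ --- and then verify that $\cO(R)$-linear dualization carries every structure map of the Hopf algebroid (source, target, comultiplication, counit, antipode) to the matching operation on $\cH$, compatibly with the sub/quotient pair $\Gamma\#\cO(R)\hookrightarrow\cH$ and $\cO(R\times_{R//\Gamma}R)\twoheadrightarrow\prod_\Gamma\cO(R)$. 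This is a finite explicit computation on the basis $\{D_w\}_{w\in\Gamma}$ and its dual. With it secured, the flat base change of the first step is automatic (everything is finite free, so $\cO(R)$-linear duals commute with $-\otimes_\bC A$) and the Zariski gluing of the last step is automatic (the equivalences are canonical).
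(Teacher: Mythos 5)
Your proposal is correct and follows essentially the same route as the paper: the paper simply constructs $\cH(Y)$ as the twist $\pi_*\cO(\cL)\otimes^{\bG_m}\cH$ of the affine case, asserts the duality with the Hopf algebroid $\pi_*\cO(Y\times_{Y//\Gamma}Y)$, and deduces the lemma from Lemma~\ref{lem1} by faithfully flat descent, which is exactly your local-trivialization-plus-flat-base-change argument spelled out. The one point you flag as the real obstacle --- verifying that the $\cO(R)$-bilinear pairing between $\cH$ and $\cO(R\times_{R//\Gamma}R)$ is perfect and respects the Hopf algebroid structure --- is also left unproved in the paper (it is only asserted ``in an appropriate sense''), so you have correctly located where the substantive content lies rather than missed a step.
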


\subsection{The Demazure descent algebra.}Now fix a simple reflection $s_i$ and let $M$ be an $\langle s_i\rangle$-equivariant quasicoherent sheaf on $R$. Then $M$ descends to $R//\langle s_i\rangle$ if and only if for every $m\in M$ there is a unique element $m'\in M$ such that $\alpha_i.m'=(1-s_i)(m)$. In that case one may define the operator
$$
\begin{matrix}G_i:&M &\to& M\\
&m&\mapsto&m'.\end{matrix}
$$
By the uniqueness of $m'$, one sees that $G_i$ is linear and satisfies the relations
$$
G_i^2=0
$$
and
$$
G_i(f.m)=D_i(f).m+s_i(f).G_i(m)
$$
for any $f\in\cO(R)$, $m\in M$. This leads us to define the \emph{Demazure descent algebra}, $\cD$, to be the algebra generated by $\cO(R)$ and the symbols $\{G_i\}_{i\in\Sigma}$ subject to the relations
$$
G_i^2=0
$$
and
$$
G_i.f=D_i(f)+s_i(f).G_i
$$
for any $f\in\cO(R)$. As for $\cH$ the second relation follows from the relation
$$
G_i.\theta=\langle\alpha_i^\vee,\theta\rangle+s_i(\theta).G_i
$$
for any linear function $\theta\in\cO(R)$. It is easy to see that $\cD$ is free over its subalgebra $\cO(R)$ with respect to both left and right multiplication, with basis consisting of all words in $\{G_i\}_{i\in\Sigma}$ without double letters.

\subsection{}A $\cD$-module is precisely the same thing as a quasicoherent sheaf on $R$ which descends to $R//\langle s_i\rangle$ for each $i\in\Sigma$. By Lemma \ref{lem1}, a quasicoherent sheaf on $R$ which descends to $R//\Gamma$ is the same thing as a module for the quotient
$$
\cH=\cD/(\cD B_D\cD)
$$
where $B_D$ is the set of Demazure braid relations, 
$$
B_D=\{\underbrace{G_iG_jG_i\ldots}_{m_{i,j}}-\underbrace{G_jG_iG_j\ldots}_{m_{i,j}}\}_{i,j\in\Gamma}.
$$
On the other hand, a $\Gamma$-equivariant quasicoherent sheaf on $R$ which descends to $R//\langle s_i\rangle$ for every simple reflection $s_i$ is the same thing as a module for the quotient
$$
\cI:=\cD/(\cD B\cD)
$$
where $B$ is the set of Coxeter braid relations,
$$
B=\{\underbrace{s_is_js_i\ldots}_{m_{i,j}}-\underbrace{s_js_is_j\ldots}_{m_{i,j}}\}_{i,j\in\Gamma}
$$
where we have set $s_i=1-\alpha_i.G_i\in\cD$ for each $i\in\Sigma$.

\subsection{}Note that $\cD$ is graded, with each $G_i$ in degree $-1$ and each linear operator $\theta$ on $R$ in degree $1$, so that the quotient map 
$$
\cD\to\cH=\cD/(\cD B_D\cD)
$$
respects the grading. Since the generators of $B$ are also homogeneous, $\cD/(\cD B\cD)$ is also graded and the quotient map
$$
\cD\to\cI=\cD/(\cD B\cD)
$$
respects the grading. As in \ref{sheafify}, for a $W$-equivariant $R$-bundle $\pi:Y\to X$, we obtain the sheaves of algebras $\cD(Y)$, $\cI(Y)$ on $X$. They both receive algebra maps from $\pi_*\cO(Y)$, and are locally free on both sides over $\pi_*\cO(Y)$, and we have the sequence of surjective algebra homomorphisms:
$$
\cD(Y)\to\cI(Y)\to\cH(Y)
$$
which affine locally is identified up to $\bG_m$-action with $\cD\to\cI\to\cH$. A $\cD(Y)$-module over $\pi_*\cO(Y)$ is the same thing as a quasicoherent sheaf on $Y$ which descends to $Y//\langle s_i\rangle$ for each simple refection $s_i$. An $\cI(Y)$-module over $\pi_*\cO(Y)$ is the same thing as a $\Gamma$-equivariant quasicoherent sheaf which descends to $Y//\langle s_i\rangle$ for each simple refection $s_i$. Therefore to prove Theorem \ref{thm1}, it suffices to prove that the natural map $\cI(Y)\to\cH(Y)$ is an isomorphism. For this it is enough to prove the case $Y=R$.

\section{Proof}

\subsection{}Recall we have the projection map $\cI\to\cH$, so that $\cD B\cD\subset \cD B_D\cD$. Let
$$
B_{k,l}=\underbrace{s_ks_ls_k\ldots}_{m_{k,l}}-\underbrace{s_ls_ks_l\ldots}_{m_{k,l}}
$$
be one of the elements of $B$. Recall that $s_i=1-\alpha_iG_i$ for each $i$. Expanding $B_{k,l}$ with respect to the basis as a left $\cO(R)$-module consisting of monomials in the symbols $G_i$, we get 
$$
\begin{matrix*}[l]B_{k,l}&=&(-1)^{m_{k,l}}(\underbrace{\alpha_k.s_k(\alpha_l).s_ks_l(\alpha_k)\ldots}_{m_{k,l}}\underbrace{G_kG_lG_k\ldots}_{m_{k,l}}-\underbrace{\alpha_l.s_l(\alpha_k).s_ls_k(\alpha_l)\ldots}_{m_{k,l}}\underbrace{G_lG_kG_l\ldots}_{m_{k,l}})\\
&&+ l.o.t.\end{matrix*}
$$
Here the lower order terms are left $\cO(R)$-linear combinations of double-letter-free words in $G_k$, $G_l$ of length strictly less than $m_{k,l}$. Such words correspond to elements of $\Gamma$ with unique reduced expressions. Therefore in $\cH$, the images of such words, together with the common image of $\underbrace{G_kG_lG_k\ldots}_{m_{k,l}}$ and $\underbrace{G_kG_lG_k\ldots}_{m_{k,l}}$, are $\cO(R)$-linearly independent. It follows that the lower order terms are zero, and also that the two coefficients are equal:
$$
\underbrace{\alpha_k.s_k(\alpha_l).s_ks_l(\alpha_k)\ldots}_{m_{k,l}}=\underbrace{\alpha_l.s_l(\alpha_k).s_ls_k(\alpha_l)\ldots}_{m_{k,l}}.
$$
We are able to compute them exactly. Let us embed the rank two root system $X_{k,l}$ consisting of those roots in the span of $\alpha_k,\alpha_l$ inside two-dimensional real Euclidean space in the usual manner, so that the angle measured clockwise from $\alpha_k$ to $\alpha_l$ is less than $\pi$. Then the positive roots in $X_{k,l}$ are precisely those which lie in the $\R^+$-cone swept out by rotating the half line $\R^+\alpha_k$ clockwise as far as $\R^+\alpha_l$. Then $s_k\alpha_l$ is the root in $X_{k,l}$ closest in angle clockwise from $\alpha_k$, and the rotation $s_ks_l$ sends any root $\alpha$ in $X_{k,l}$ to the root second closest in angle clockwise from $\alpha$. It follows that the roots
$$
\underbrace{\alpha_k, s_k(\alpha_l), s_ks_l(\alpha_k),\ldots}_{m_{k,l}}
$$
are precisely the positive roots of $X_{k,l}$, ordered clockwise from $\alpha_k$ to $\alpha_l$. Likewise the roots
$$
\underbrace{\alpha_l, s_l(\alpha_k), s_ls_k(\alpha_l),\ldots}_{m_{k,l}}
$$
are precisely the positive roots of $X_{k,l}$ ordered in the other direction. We may therefore write
$$
B_{k,l}=(-1)^{m_{k,l}}\Delta_{k,l}B^D_{k,l}
$$
where $\Delta_{k,l}$ is the product of positive roots in $X_{k,l}$ and $B^D_{k,l}=\underbrace{G_kG_lG_k\ldots}_{m_{k,l}}-\underbrace{G_lG_kG_l\ldots}_{m_{k,l}}$.

\subsection{}We are required to prove that $B^D_{k,l}\in \cD B \cD$.

\begin{lem}\label{lem3}For every integer $n>0$ and every word $\Xi$ in the symbols $D_k$, $D_l$, the elements $\Xi(\Delta_{k,l}).\underbrace{G_kG_lG_k\ldots}_{m_{k,l}+n}$ and $\Xi(\Delta_{k,l}).\underbrace{G_lG_kG_l\ldots}_{m_{k,l}+n}$ are both contained in $\cD B \cD$.\end{lem}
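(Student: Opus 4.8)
The plan is to derive the whole statement from the single relation $\Delta_{k,l}B^D_{k,l}\in\cD B\cD$ proved above, using nothing beyond the straightening law $G_i.f=D_i(f)+s_i(f).G_i$, the relation $G_i^2=0$, the identity $s_i(f)=f-\alpha_i.D_i(f)$ (equivalently $s_i=1-\alpha_iD_i$ in $\cD$), and the fact that $\cD B\cD$ is a two-sided ideal. Fix $m=m_{k,l}$ and $\Delta=\Delta_{k,l}$; for $a\in\{k,l\}$, writing $b$ for the other index, let $w_a^{(p)}$ denote the length-$p$ alternating word $G_aG_bG_a\cdots$ beginning with $G_a$. Two elementary facts will be used throughout: $G_a.w_a^{(p)}=0$ while $G_a.w_b^{(p)}=w_a^{(p+1)}$; and right multiplication of $w_a^{(p)}$ by whichever of $G_k,G_l$ it does not end in yields $w_a^{(p+1)}$.

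First I would treat the case in which $\Xi$ is empty, which is also the base of the main induction. In the product $\Delta B^D_{k,l}.G_k=\Delta.(w_k^{(m)}-w_l^{(m)}).G_k$ one term vanishes by $G_a^2=0$ and the other equals $\pm\Delta.w_k^{(m+1)}$ or $\pm\Delta.w_l^{(m+1)}$ according to the parity of $m$; doing the same with $G_l$ produces the other of these, so both $\Delta.w_k^{(m+1)}$ and $\Delta.w_l^{(m+1)}$ lie in $\cD B\cD$. Given $\Delta.w_k^{(m+n)},\Delta.w_l^{(m+n)}\in\cD B\cD$, right multiplication by the non-terminal letter produces $\Delta.w_k^{(m+n+1)},\Delta.w_l^{(m+n+1)}\in\cD B\cD$; so by induction on $n$, both $\Delta.w_k^{(m+n)}$ and $\Delta.w_l^{(m+n)}$ lie in $\cD B\cD$ for every $n\geq1$.

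Next I would induct on the length $r$ of $\Xi$, the case $r=0$ being the previous paragraph. Assuming the Lemma for all words of length $<r$, write $\Xi=D_i\Xi'$ with $|\Xi'|=r-1$ and let $j$ be the other index. For the alternation beginning with $G_i$: since $\Xi'(\Delta).w_i^{(m+n)}\in\cD B\cD$ by the inductive hypothesis, so is $G_i.\bigl(\Xi'(\Delta).w_i^{(m+n)}\bigr)=D_i(\Xi'(\Delta)).w_i^{(m+n)}+s_i(\Xi'(\Delta)).G_i.w_i^{(m+n)}=\Xi(\Delta).w_i^{(m+n)}$, the last term vanishing because $G_i.w_i^{(m+n)}=0$; hence $\Xi(\Delta).w_i^{(m+n)}\in\cD B\cD$ for all $n\geq1$. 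For the alternation beginning with $G_j$: since $\Xi'(\Delta).w_j^{(m+n)}\in\cD B\cD$, so is $G_i.\bigl(\Xi'(\Delta).w_j^{(m+n)}\bigr)=\Xi(\Delta).w_j^{(m+n)}+s_i(\Xi'(\Delta)).w_i^{(m+n+1)}$; and using $s_i(\Xi'(\Delta))=\Xi'(\Delta)-\alpha_i.\Xi(\Delta)$, the correction term equals $\Xi'(\Delta).w_i^{(m+n+1)}-\alpha_i.\Xi(\Delta).w_i^{(m+n+1)}$, whose first summand lies in $\cD B\cD$ by the inductive hypothesis on $\Xi'$ (at $n+1$) and whose second lies in $\cD B\cD$ by the case just treated for $\Xi$ (at $n+1$). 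Thus $\Xi(\Delta).w_j^{(m+n)}\in\cD B\cD$, completing the step and the induction.

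I expect the only real subtlety to be that last correction term: pushing $G_i$ past the polynomial coefficient $\Xi'(\Delta)$ creates the strictly longer alternating word $w_i^{(m+n+1)}$ with coefficient $s_i(\Xi'(\Delta))$, which is not itself a Demazure image, and the identity $s_i=1-\alpha_iD_i$ is precisely what splits it into two pieces already controlled — one via the induction on $|\Xi|$, the other via the first half of the very same step; it is important that the two alternations be dispatched in this order, so that the first is available for all $n$ (in particular at $n+1$) before the second calls on it, ensuring no circularity. Once the Lemma is in hand, choosing $\Xi$ to be a reduced word for the longest element $w_0$ of $\langle s_k,s_l\rangle$ makes $\Xi(\Delta_{k,l})$ a nonzero scalar, so the Lemma shows that every alternating word in $G_k,G_l$ of length exceeding $m_{k,l}$ lies in $\cD B\cD$; since $G_iB^D_{k,l}$ is always $\pm$ such a word, left multiplying $\Delta_{k,l}B^D_{k,l}\in\cD B\cD$ successively by $G_k$'s and $G_l$'s strips the coefficient $\Delta_{k,l}$ down by one Demazure operator at each step and eventually exhibits $D_{w_0}(\Delta_{k,l})B^D_{k,l}$, a nonzero multiple of $B^D_{k,l}$, inside $\cD B\cD$ — which is what remains to prove the Theorem.
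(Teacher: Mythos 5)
Your argument for Lemma \ref{lem3} is correct and is essentially the paper's own proof: the same base case (right multiplication of $\Delta_{k,l}B^D_{k,l}$ by the non-terminal generator, then induction on $n$), and the same induction on the length of $\Xi$, handling the word beginning with $G_i$ first via $G_i.w_i^{(m+n)}=0$ and then the other word via the splitting $s_i(\Xi'(\Delta_{k,l}))=\Xi'(\Delta_{k,l})-\alpha_i.\Xi(\Delta_{k,l})$, in exactly the order needed to avoid circularity. (Your closing sketch for deducing the Theorem, which takes $\Xi$ to be a reduced word for $w_0$ so that $\Xi(\Delta_{k,l})$ is a nonzero scalar, is a mild variant of the paper's Lemma \ref{handy}, which instead uses the element $\Delta_{k,l}^{-1}\sum_{g}sgn(g)g$; but that lies outside the statement under review.)
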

\begin{proof}For brevity we set
$$
A_k^n:=\underbrace{G_kG_lG_k\ldots}_{m_{k,l}+n}
$$
and
$$
A_l^n:=\underbrace{G_lG_kG_l\ldots}_{m_{k,l}+n}.
$$
We proceed by induction on the length of the word $\Xi$. The case of length $0$ follows by multiplying $\pm B_{k,l}$ on the right by words in $G_k$, $G_l$. Suppose next that the claim is known for all $\Xi$ of length at most $p$. Let $\Xi$ be a word of length $p+1$, say $\Xi=D_k\Xi'$ for some word $\Xi'$ of length $p$ (the proof is the same if $\Xi$ starts with $D_l$). By hypothesis,
$$
\Xi'(\Delta_{k,l}).A_k^n
$$
and
$$
\Xi'(\Delta_{k,l}).A_l^n
$$
are both contained in $\cD B \cD$. Multiplying the first element on the left by $G_k$, we get that
$$
\begin{matrix*}[l]
G_k.\Xi'(\Delta_{k,l}).A_k^n&=&\Xi(\Delta_{k,l}).A_k^n+s_k(\Xi'(\Delta_{k,l})).G_k.A_k^n\\
&=&\Xi(\Delta_{k,l}).A_k^n
\end{matrix*}
$$
is contained in $\cD B\cD$. Multiplying the second element on the left by $G_k$, we get that
$$
\begin{matrix*}[l]
G_k.\Xi'(\Delta_{k,l}).A_l^n&=&\Xi(\Delta_{k,l}).A_l^n+s_k(\Xi'(\Delta_{k,l})).G_k.A_l^n\\
&=&\Xi(\Delta_{k,l}).A_k^n+s_k(\Xi'(\Delta_{k,l})).A_k^{n+1}\\
&=&\Xi(\Delta_{k,l}).A_k^n+\Xi'(\Delta_{k,l}).A_k^{n+1}-\alpha_k.\Xi(\Delta_{k,l}).A_k^{n+1}
\end{matrix*}
$$
is contained in $\cD B\cD$. The second term of the RHS is contained in $\cD B\cD$ by hypothesis, while the third term of the RHS is contained in $\cD B\cD$, as one sees from the previous equation (substituting $n+1$ for $n$ and multiplying $\alpha_k$). Thus $\Xi(\Delta_{k,l}).A_k^n$ is contained in $\cD B\cD$, as required.\end{proof}

\begin{lem}\label{handy}There exists an element $Z$ of the left $\bC[\alpha_k,\alpha_l]$-submodule of $\cH$ spanned by the words in $D_k$, $D_l$ such that $Z(\Delta_{k,l})=1$.\end{lem}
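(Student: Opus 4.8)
\subsection{Proof proposal} My plan is to exhibit $Z$ explicitly. Let $w_0$ be the longest element of the dihedral subgroup $\langle s_k,s_l\rangle\subset\Gamma$ and put $Z=c^{-1}D_{w_0}$, where $c:=D_{w_0}(\Delta_{k,l})$, once we have checked that $c$ is a nonzero scalar. Since $w_0=s_ks_ls_k\cdots$ ($m_{k,l}$ letters) is a reduced word, $D_{w_0}=D_kD_lD_k\cdots$ ($m_{k,l}$ letters) is a double-letter-free word in $D_k,D_l$, hence lies in the left $\bC[\alpha_k,\alpha_l]$-submodule of $\cH$ appearing in the statement, and then so does $Z$. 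So the whole matter reduces to showing $D_{w_0}(\Delta_{k,l})\in\bC^\times$.

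That $D_{w_0}(\Delta_{k,l})$ is a scalar at all is immediate from the grading: $\Delta_{k,l}$ is a product of the $m_{k,l}$ positive roots of $X_{k,l}$, hence homogeneous of degree $m_{k,l}$, whereas $D_{w_0}$ is homogeneous of degree $-\ell(w_0)=-m_{k,l}$, so $D_{w_0}(\Delta_{k,l})$ has degree $0$. For the non-vanishing I would invoke the classical operator identity
$$
\Delta_{k,l}\cdot D_{w_0}=\sum_{w\in\langle s_k,s_l\rangle}(-1)^{\ell(w)}w
$$
as operators on $\cO(R)$ — the rank two instance of the standard formula for the longest Demazure operator (see e.g.\ \cite{kumar}; it also follows by a straightforward induction on $\ell(w_0)$ starting from $\alpha_iD_i=1-s_i$). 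Evaluating both sides on $\Delta_{k,l}$, and using that $w$ sends exactly $\ell(w)$ of the positive roots of $X_{k,l}$ to negative ones, so that $w(\Delta_{k,l})=(-1)^{\ell(w)}\Delta_{k,l}$, we obtain $\Delta_{k,l}\cdot D_{w_0}(\Delta_{k,l})=\sum_{w}\Delta_{k,l}=2m_{k,l}\,\Delta_{k,l}$. As $\cO(R)$ is a domain and $\Delta_{k,l}\neq0$, this forces $D_{w_0}(\Delta_{k,l})=2m_{k,l}$, so $Z=(2m_{k,l})^{-1}D_{w_0}$ is the required element.

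The only genuinely substantive step is the non-vanishing of $D_{w_0}(\Delta_{k,l})$; the operator identity dispatches it cleanly, but one could instead compute this constant directly by iterating Demazure operators along the clockwise-ordered list of positive roots of $X_{k,l}$ set up in the preceding subsection, which again yields $2m_{k,l}=|\langle s_k,s_l\rangle|$. Either way, nothing in the argument uses anything about $\Gamma$ beyond $X_{k,l}$ being a rank two root system.
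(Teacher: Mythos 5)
Your proof is correct, and in fact it produces the very same element as the paper: the paper's $Z$ is $|\Gamma_{k,l}|^{-1}\Delta_{k,l}^{-1}\sum_{g}sgn(g)g$, which by the identity you quote is exactly your $(2m_{k,l})^{-1}D_{w_0}$. The difference lies in how the two required properties --- membership in the left $\bC[\alpha_k,\alpha_l]$-span of words in $D_k,D_l$, and the evaluation on $\Delta_{k,l}$ --- are verified. You start from the word $D_{w_0}=D_kD_lD_k\cdots$, for which membership is free, and then need the full operator identity $\Delta_{k,l}\cdot D_{w_0}=\sum_{w}(-1)^{\ell(w)}w$ (a genuine theorem, though standard and available in \cite{kumar}) to conclude $D_{w_0}(\Delta_{k,l})=2m_{k,l}\neq 0$; your grading observation correctly shows this value is a scalar, but only the identity rules out its vanishing, so that identity is carrying the whole weight of the argument. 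The paper instead starts from the antisymmetrizer $\Delta_{k,l}^{-1}\sum_{g}sgn(g)g$, for which the evaluation on $\Delta_{k,l}$ is immediate, and obtains membership from two softer inputs: anti-invariant polynomials are divisible by the product of positive roots, and $\cH_{k,l}$ is the \emph{full} stabilizer of $\bC[\alpha_k,\alpha_l]$ inside $\Gamma_{k,l}\#\bC(\alpha_k,\alpha_l)$ (a characterization already recorded earlier in the paper). So the paper sidesteps the Demazure formula for the longest element entirely and never needs to know which combination of words its $Z$ is, while your version buys an explicit closed form for $Z$ as a single normalized word at the cost of invoking (or proving by induction) that formula. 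Both arguments are complete and correct.
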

\begin{proof}We note that the submodule in question is the subalgebra generated by $\alpha_k$, $\alpha_l$, $D_k$ and $D_l$, which is the algebra associated to the rank $2$ root system generated by $\alpha_k$, $\alpha_l$ in the same way that $\cH$ is associated to the root system of $\Gamma$. We will call this algebra $\cH_{k,l}$ and write $\Gamma_{k,l}$ for its associated Coxeter group. As we have already remarked, $\cH_{k,l}$ is equal to the subalgebra of $\Gamma_{k,l}\#\bC(\alpha_k,\alpha_l)$ consisting of all those elements which send $\bC[\alpha_k,\alpha_l]$ to itself in the natural action on $\bC(\alpha_k,\alpha_l)$. We note that
$$
\Delta_{k,l}^{-1}\sum_{g\in\Gamma_{k,l}}sgn(g)g
$$
is such an element, and it sends $\Delta_{k,l}$ to the non-zero scalar $|\Gamma_{k,l}|$.\end{proof}

In combination, we obtain:
\begin{lem}\label{yes}The elements $\underbrace{G_kG_lG_k\ldots}_{m_{k,l}+1}$ and $\underbrace{G_lG_kG_l\ldots}_{m_{k,l}+1}$ are both contained in $\cD B \cD$.\end{lem}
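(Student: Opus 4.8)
The plan is to obtain Lemma \ref{yes} by combining Lemmas \ref{lem3} and \ref{handy}; no computation beyond those two is needed, the remaining work being bookkeeping inside the two-sided ideal $\cD B\cD$.

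First, apply Lemma \ref{handy} to fix an element $Z$ of the left $\bC[\alpha_k,\alpha_l]$-span of the words in $D_k,D_l$ with $Z(\Delta_{k,l})=1$, and write $Z=\sum_j f_j\,\Xi_j$ with $f_j\in\bC[\alpha_k,\alpha_l]\subseteq\cO(R)$ and each $\Xi_j$ a word in $D_k,D_l$. Since $\cO(R)\subseteq\cH$ acts on $\cO(R)$ by multiplication, the equality $Z(\Delta_{k,l})=1$ unwinds to $\sum_j f_j\cdot\Xi_j(\Delta_{k,l})=1$ in $\cO(R)$, where $\Xi_j(\Delta_{k,l})\in\bC[\alpha_k,\alpha_l]$ denotes the result of applying to $\Delta_{k,l}$ the composite of Demazure operators named by $\Xi_j$.

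Next, feed this into Lemma \ref{lem3} with $n=1$: for each $j$, both $\Xi_j(\Delta_{k,l})\cdot\underbrace{G_kG_lG_k\ldots}_{m_{k,l}+1}$ and $\Xi_j(\Delta_{k,l})\cdot\underbrace{G_lG_kG_l\ldots}_{m_{k,l}+1}$ lie in $\cD B\cD$. Left-multiplying each of these by $f_j$ keeps it in $\cD B\cD$, since $\cD B\cD$ is a two-sided ideal of $\cD$ and $f_j\in\cO(R)\subseteq\cD$; summing over $j$ and using $\sum_j f_j\Xi_j(\Delta_{k,l})=1$ then yields exactly $\underbrace{G_kG_lG_k\ldots}_{m_{k,l}+1}\in\cD B\cD$, and likewise $\underbrace{G_lG_kG_l\ldots}_{m_{k,l}+1}\in\cD B\cD$.

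I do not expect a real obstacle at this stage: the substantive content has already been isolated in Lemma \ref{lem3} (the induction on the length of $\Xi$, resting on the factorization $B_{k,l}=(-1)^{m_{k,l}}\Delta_{k,l}B^D_{k,l}$) and in Lemma \ref{handy} (existence of a left multiplier of $\Delta_{k,l}$ inside the rank-two nil Hecke algebra, furnished by the antisymmetrizer). The one point demanding care is the handedness: one multiplies the words $\underbrace{G_kG_lG_k\ldots}_{m_{k,l}+1}$ and $\underbrace{G_lG_kG_l\ldots}_{m_{k,l}+1}$ on the left by the polynomials $\Xi_j(\Delta_{k,l})$ and then again on the left by the $f_j$, matching the presentation of $Z$ as a \emph{left} $\bC[\alpha_k,\alpha_l]$-combination of Demazure words, and one invokes that $\cD B\cD$ is two-sided only to absorb these left factors.
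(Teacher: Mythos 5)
Your proof is correct and is exactly the argument the paper intends: the paper states Lemma \ref{yes} with only the phrase ``In combination, we obtain,'' and your write-up supplies precisely that combination --- expand $Z=\sum_j f_j\Xi_j$ from Lemma \ref{handy}, apply Lemma \ref{lem3} with $n=1$ to each $\Xi_j(\Delta_{k,l})$-multiple, absorb the left factors $f_j$ into the two-sided ideal, and sum. The handedness point you flag is the right one to check, and you handle it correctly.
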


Finally, we have:
\begin{thm}$B^D_{k,l}\in \cD B \cD$.\end{thm}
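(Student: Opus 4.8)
The plan is to remove the factor $\Delta_{k,l}$ from the relation $B_{k,l}=(-1)^{m_{k,l}}\Delta_{k,l}B^D_{k,l}\in\cD B\cD$ by multiplying on the left by a lift to $\cD$ of the operator $Z$ supplied by Lemma \ref{handy}, absorbing all the resulting error terms by means of Lemma \ref{yes}.

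The first step is to record the consequence of Lemma \ref{yes} that is actually used. Since $G_k^2=G_l^2=0$ in $\cD$, a one-line computation gives $G_k.B^D_{k,l}=-\underbrace{G_kG_lG_k\ldots}_{m_{k,l}+1}$ and $G_l.B^D_{k,l}=\underbrace{G_lG_kG_l\ldots}_{m_{k,l}+1}$, so both $G_k.B^D_{k,l}$ and $G_l.B^D_{k,l}$ lie in $\cD B\cD$ by Lemma \ref{yes}. Because $\cD B\cD$ is a two-sided ideal, left multiplication then yields $\omega.B^D_{k,l}\in\cD B\cD$ for every word $\omega$ of positive length in $G_k,G_l$, and hence $f.\omega.B^D_{k,l}\in\cD B\cD$ for every such $\omega$ and every $f\in\cO(R)$.

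The second step is to lift $Z$. Writing $Z=\sum_{w\in\Gamma_{k,l}}c_w D_w$ with $c_w\in\bC[\alpha_k,\alpha_l]$ (possible since $\cH_{k,l}$ is free over $\bC[\alpha_k,\alpha_l]$ on the $D_w$), put $\widetilde Z:=\sum_w c_w G_w\in\cD$. Commuting $\Delta_{k,l}$ leftward through the $G_i$'s appearing in $\widetilde Z.\Delta_{k,l}$, using only the relation $G_i.f=D_i(f)+s_i(f).G_i$, the unique resulting term involving no $G$ equals $\sum_w c_w D_w(\Delta_{k,l})=Z(\Delta_{k,l})=1$, while every other term is a left $\cO(R)$-multiple of a word of positive length in $G_k,G_l$. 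This produces an identity $\widetilde Z.\Delta_{k,l}=1+R$ in $\cD$ with $R$ a left $\cO(R)$-linear combination of positive-length words in $G_k,G_l$.

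Combining the two steps, $B^D_{k,l}=\widetilde Z.\Delta_{k,l}.B^D_{k,l}-R.B^D_{k,l}$; here $\widetilde Z.\Delta_{k,l}.B^D_{k,l}=(-1)^{m_{k,l}}\widetilde Z.B_{k,l}\in\cD B\cD$, and $R.B^D_{k,l}\in\cD B\cD$ by the first step, so $B^D_{k,l}\in\cD B\cD$. I expect the only delicate point to be the bookkeeping in the second step, i.e. checking that commuting $\Delta_{k,l}$ past $\widetilde Z$ reproduces exactly $Z(\Delta_{k,l})=1$ as the term free of $G$'s while every remaining term genuinely carries some $G_i$; the rest is formal manipulation inside the two-sided ideal $\cD B\cD$. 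Applying this for every pair $k,l$ gives $\cD B_D\cD\subseteq\cD B\cD$, hence $\cI\cong\cH$, hence (by the discussion at the end of \S2) $\cI(Y)\cong\cH(Y)$ and Theorem \ref{thm1}.
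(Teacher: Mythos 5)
Your proposal is correct and is essentially the paper's own argument: both strip the factor $\Delta_{k,l}$ from $B_{k,l}=(-1)^{m_{k,l}}\Delta_{k,l}B^D_{k,l}$ using the operator $Z$ of Lemma \ref{handy}, and both absorb the commutation error terms via Lemma \ref{yes} (in the form $G_k.B^D_{k,l},\,G_l.B^D_{k,l}\in\cD B\cD$). The paper merely organizes the same computation as an induction on the length of the $D$-word rather than lifting $Z$ to $\widetilde Z\in\cD$ and expanding $\widetilde Z.\Delta_{k,l}=1+R$ all at once; your only cosmetic imprecision is that $G_w$ for the longest element of $\Gamma_{k,l}$ depends on a choice of reduced expression, but any choice works.
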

\begin{proof}By Lemma \ref{handy}, it suffices to show that for every word $\Xi$ in the symbols $D_k$, $D_l$, the element $\Xi(\Delta_{k,l}).B^D_{k,l}$ is contained in $\cD B \cD$. Again we proceed by induction on the length of $\Xi$. The length $0$ case is the fact that $B_{k,l}\in\cD B \cD$. Suppose next that the claim is known for all $\Xi$ of length at most $p$. Let $\Xi$ be a word of length $p+1$, say $\Xi=D_k\Xi'$ for some word $\Xi'$ of length $p$ (the proof is the same if $\Xi$ starts with $D_l$). By hypothesis, $\Xi'(\Delta_{k,l}).B^D_{k,l}$ is contained in $\cD B \cD$. Therefore
$$
\begin{matrix}
G_k.\Xi'(\Delta_{k,l}).B^D_{k,l}&=&\Xi(\Delta_{k,l}).B^D_{k,l}+s_k(\Xi'(\Delta_{k,l})).G_k.B^D_{k,l}\\
&=&\Xi(\Delta_{k,l}).B^D_{k,l}-s_k(\Xi'(\Delta_{k,l})).\underbrace{G_kG_lG_k\ldots}_{m_{k,l}+1}
\end{matrix}
$$
is contained in $\cD B \cD$. By Lemma \ref{yes} we are done.
\end{proof}

\

\end{document}